\documentclass[10pt]{article}
\usepackage{amsmath, geometry, amssymb, fullpage, amsthm}
\usepackage{graphicx}

\def\bC{\mathbb{C}}

\def\bH{\mathbb{H}}

\def\bQ{\mathbb{Q}}

\def\bZ{\mathbb{Z}}

\def\cB{\mathcal{B}}

\def\cD{\mathcal{D}}
\def\cE{\mathcal{E}}

\def\cM{\mathcal{M}}

\def\cO{\mathcal{O}}

\def\cS{\mathcal{S}}

\def\fa{\mathfrak{a}}

\def\fd{\mathfrak{d}}

\def\ff{\mathfrak{f}}

\def\fp{\mathfrak{p}}

\def\GL{\operatorname{GL}}

\def\Gal{\operatorname{Gal}}

\def\Im{\operatorname{Im}}

\def\Nm{\operatorname{Nm}}

\def\SL{\operatorname{SL}}

\def\det{\operatorname{det}}

\def\dim{\operatorname{dim}}

\def\log{\operatorname{log}}

\newcommand{\beas}{\begin{eqnarray*}}
\newcommand{\eeas}{\end{eqnarray*}}

\newcommand{\ls}[2]{\left( \frac{#1}{#2} \right)}

\theoremstyle{plain}
\newtheorem{theorem}{Theorem}
\newtheorem{lemma}{Lemma}
\newtheorem{corollary}{Corollary}[theorem]

\theoremstyle{definition}
\newtheorem{definition}{Definition}

\parindent=.175in
\parskip=.05in

\title{Finding elementary formulas for theta functions associated to even sums of squares}
\author{Ila Varma \\
{\small ivarma@math.princeton.edu}\\
{\small Princeton University,}\\ 
{\small Department of Mathematics} \\
{\small Fine Hall, Washington Road} \\
{\small Princeton, NJ 08544} \\ 
{\small United States}
}

\begin{document}
\maketitle

\noindent \textbf{Abstract.} This article discusses the classical problem of how to calculate $r_n(m)$, the number of ways to represent an integer $m$ by a sum of $n$ squares from a computational efficiency viewpoint. Although this problem has been studied in great detail, there are very few formulas given for the purpose of computing $r_n(m)$ quickly. More precisely, for fixed $n$, we want a formula for $r_n(m)$ that computes in log-polynomial time (with respect to $m$) when the prime factorization of $m$ is given. Restricting to even $n$, we can view $\theta_n(q)$, the theta function associated to sums of $n$ squares, as a modular form of weight $n/2$ on $\Gamma_1(4)$. In particular, we show that for only a small finite list of $n$ can $\theta_n$ be written as a linear combination consisting entirely of Eisenstein series and cusp forms with complex multiplication. These are the only $n$ that give rise to ``elementary'' formulas for $r_n(m)$, i.e. formulas such that for a prime $p$, $r_n(p)$ can be calculated in $\cO(\log(p))$-time. Viewing $\theta_n(q)$ as one of the simpler examples of modular forms that are not strictly Eisenstein, this result motivates the necessity of a log-polynomial time algorithm that directly calculates the Fourier coefficients of modular forms in the generic situation when there is no such formula, as described in Couveignes and Edixhoven's forthcoming book (for level 1 cases) and Peter Bruin's Ph.D. thesis (for higher level, including 4). 

\section*{Introduction}

We are interested in the even case of the classical ``sums of squares'' problem. For positive $n \in \bZ$ and positive $m \in \bZ$, let
\begin{equation} \label{def}
	r_n(m) := \#\{{\bf x} = (x_1,...x_n) \in \bZ^n : x_1^2 + x_2^2 + ... + x_n^2 = m\}. \end{equation}
We wish to understand $r_n(m)$ by analyzing the generating function attached to $r_n(m)$, denoted by $\theta_n$. As a complex function on the open unit disk $\cD \subseteq \bC$, $\theta_n$ is the theta function attached to the standard $n$-dimensional quadratic form,
\begin{equation} \label{theta}
	\theta_n(q) = \sum_{m = 0}^\infty r_n(m)q^m = 1 + 2n\cdot q + 4{n\choose 2} \cdot q^2 + 8 {n\choose 3} \cdot q^3 + \left[2^4 {n \choose 4} + 2n \right] \cdot q^4 + \hdots .\end{equation}
Equivalently, one can define $\theta_n(q)$ by using the multiplicative property of $r_n(m)$:
	$$\theta_n(q) = \theta_1(q)^n = \left(1 + 2q + 2q^4 + 2q^9 + ...\right)^n.$$ 
Furthermore, if we view $\cD$ as the image of the upper half plane $\bH = \{z \in \bC : \Im(z) > 0 \}$ under the map $z \mapsto e^{2\pi i z}$, then we may write,
	$$\theta_n(z) = \sum_{m = 0}^\infty r_n(m)e^{2\pi i m z} .$$ 
It satisfies the equations (see \cite{miyake}, 3.2),
\begin{equation} \label{theta eqns}
	\theta_n(-1/4z) = (2z/i)^{n/2}\theta_n(z), \quad \quad \quad \theta_n(z + 1) = \theta_n(z).\end{equation} 
Since we restrict ourselves to the case where $n$ is even, there is no need to choose a square root. The above symmetric properties illustrate that $\theta_n$ (as a function of the $\bH$-coordinate $z$) is a modular form of weight $k = \frac{n}{2}$ on the congruence subgroup $\Gamma_1(4)$, consisting of matrices $\gamma \in \SL_2(\bZ)$ such that $\gamma \equiv \left(\begin{smallmatrix} 1 & \ast \\ 0 & 1 \end{smallmatrix} \right) \pmod{4}$ (see \cite{ds}, 1.2 or \cite{miyake}, 3.2).

We wish to analyze for which $n$ does $\theta_n(z)$ have coefficients that are ``straightforward to compute'', i.e. for fixed $n$, when $r_n(m)$ can be written as a function on $m \in \bZ$ such that each Fourier coefficient $a_m = r_n(m)$ of $\theta_n(z)$ can be computed in $\cO(\log(m))$-time, given the prime factorization of $m$. Equivalently, for a prime $p$, we want formulas for $r_n(p)$ that can be computed in time polynomial to the number of digits of $p$. We therefore establish the following definition.

\begin{definition} A modular form $f$ of integral weight is \emph{elementary} if and only if $f$ is a linear combination of Eisenstein series and cusp forms with complex multiplication (CM cusp forms) as defined in Section \ref{cm}. \end{definition} 

Denote the space of modular forms on a congruence subgroup $\Gamma$ of weight $k$ as $\cM_k(\Gamma),$ and let its subspace of cusp forms be $\cS_k(\Gamma)$. The orthogonal complement (with respect to the Petersson inner product) is denoted $\cE_k(\Gamma)$, the space of Eisenstein series. Finally, we define the subspace $\cS_k^{cm}(\Gamma) \subset \cS_k(\Gamma)$ as the CM subspace, i.e. the space generated by those cusp forms which are invariant under twisting by a quadratic character. 

The definition of elementary comes up quite naturally in a variety of places. In particular, elementary modular forms are in correspondence with 2-dimensional potentially abelian $\ell$-adic representations of $\Gal(\overline{\bQ}/\bQ)$, i.e. after passing to an open subgroup (equivalently, to a finite extension of $\bQ$), the representation factors through an abelian quotient. These turn out to be particularly nice examples of geometric representations (see \cite{fontainemazur}, \S6 for details).

Furthermore, in \cite{lacunarity}, Serre calls cusp forms \emph{lacunary} if the arithmetic density
of the nonzero coefficents in the $q$-expansion is zero, and proves that cusp form $f \in \cS_k(\Gamma)$ is lacunary if and only if $f \in \cS_k^{cm}(\Gamma)$. While it is false that $\theta_n$ is lacunary for any $n \geq 4$ (due to Lagrange), it is true that $\theta_n$ is elementary if and only if the cuspidal part in its decomposition are lacunary, i.e. contribute to the value of $r_n(m)$ very rarely from a density viewpoint, making most coefficients of $\theta_n$ a sum of Eisenstein coefficients. The following theorem is our main result.

\begin{theorem} \label{main} 
Suppose $n$ is even. Then $\theta_n$ is elementary if and only if $n = 2,4,6,8,$ or $10$. \end{theorem}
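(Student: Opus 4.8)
The plan is to split off the Eisenstein part and reduce elementarity to a statement purely about the cuspidal projection. Using the orthogonal decomposition $\cM_{n/2}(\Gamma_1(4)) = \cE_{n/2}(\Gamma_1(4)) \oplus \cS_{n/2}(\Gamma_1(4))$, I write $\theta_n = E_n + C_n$ with $E_n$ Eisenstein and $C_n$ cuspidal. Since every Eisenstein series is elementary by definition, $\theta_n$ is elementary if and only if $C_n \in \cS_{n/2}^{cm}(\Gamma_1(4))$. Thus the whole theorem becomes the question of when the cuspidal projection of $\theta_n$ is a CM form.

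For the ``if'' direction I would compute the relevant cusp spaces directly. A dimension count for $\Gamma_1(4)$ (a genus-zero group with three cusps and no elliptic points) shows that $\cS_{n/2}(\Gamma_1(4))$ vanishes for $n = 2,4,6,8$, so that $C_n = 0$ and $\theta_n$ is trivially elementary; and that $\cS_5(\Gamma_1(4))$ is one-dimensional, spanned by the CM newform attached to a Hecke character of $\bQ(i)$, so $C_{10} \in \cS_5^{cm}$ automatically. This settles the five listed values.

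The substance is the ``only if'' direction: for every even $n \ge 12$ (weight $k = n/2 \ge 6$) I must show $C_n \notin \cS_{n/2}^{cm}(\Gamma_1(4))$. The structural input is that $\bQ(i)$ is the only imaginary quadratic field with conductor dividing $4$, so every CM form of level dividing $4$ arises from a Hecke character of $\bQ(i)$; consequently each such form has vanishing $p$-th Hecke eigenvalue at every prime $p \equiv 3 \pmod 4$ (these being inert in $\bQ(i)$). Therefore, were $\theta_n$ elementary, the cuspidal contribution to $r_n(p)$ would vanish at all such $p$, forcing the exact identity $r_n(p) = a_p(E_n)$ for every prime $p \equiv 3 \pmod 4$, where $a_p(E_n)$ is the explicit Eisenstein coefficient. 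It then suffices to produce, for each such $n$, an inert prime at which this identity fails --- equivalently, to show that the cuspidal projection $C_n$ does not vanish at all inert primes.

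The main obstacle is precisely making this last step uniform over the infinite family $n \ge 12$, rather than checking finitely many cases. Here the plan is to exploit that $\dim \cS_{n/2}^{cm}(\Gamma_1(4))$ stays bounded independently of $n$ --- it is controlled by the finitely many Hecke characters of $\bQ(i)$ of each infinity type and conductor dividing $(1+i)^2$ --- whereas $\dim \cS_{n/2}(\Gamma_1(4)) \sim n/4$ grows without bound, so that the generic cusp form of this level is non-CM. Combining the explicit formula for $a_p(E_n)$ with Deligne's bound on $a_p(C_n)$ and a lower bound on the genuine fluctuation of $r_n(p)$ at a fixed small inert prime should force $C_n$ to acquire a non-CM eigenform component once the weight is large; verifying this nonvanishing for every $n \ge 12$ is the crux of the argument and the step I expect to require the most care.
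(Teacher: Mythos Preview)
Your overall architecture matches the paper's second proof (its \S\ref{proof2}): decompose $\theta_n = E_n + C_n$, observe that every CM form of level dividing $4$ comes from $\bQ(i)$ and hence has $a_p=0$ at every prime $p\equiv 3\pmod 4$, and conclude that it suffices to show $a_3(C_n)=r_n(3)-a_3(E_n)\neq 0$ for all even $n\ge 12$. The ``if'' direction and the structural input on $\cS^{cm}_{n/2}(\Gamma_1(4))$ are exactly as in the paper.

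Where your plan goes soft is the execution of that last nonvanishing. Invoking Deligne's bound is a red herring: Deligne gives only $|a_p(C_n)|\le d(p)\,p^{(k-1)/2}$, an \emph{upper} bound, and cannot by itself force $a_3(C_n)\neq 0$; nor does ``fluctuation of $r_n(p)$'' have an obvious meaning here, since $r_n(3)=8\binom{n}{3}$ is a single explicit polynomial value. What the paper actually does --- and what you need --- is an \emph{explicit closed formula} for $a_3(E_n)$, obtained from the Siegel--Weil/Shimura description of the Eisenstein part of $\theta_n$, which writes $a_3(E_n)$ as (roughly) $6^{n/2-1}$ divided by the Euler number $|{\bf e}_{n/2-1}|$ or Bernoulli number $|{\bf b}_{n/2}|$. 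Since $|{\bf e}_k|,|{\bf b}_k|$ grow like $(k/\pi e)^k$, the Eisenstein coefficient $a_3(E_n)$ becomes negligible compared with the cubic polynomial $r_n(3)=\tfrac{4}{3}n(n-1)(n-2)$ once $n$ is moderately large; a short monotonicity check plus a finite table handles $12\le n\le 20$. That is the uniform-in-$n$ argument you are looking for.

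For comparison, the paper also gives an alternative first proof that avoids the Siegel--Weil input entirely: after establishing $\dim\cS^{cm}_{n/2}(\Gamma_1(4))\le 1$, it writes down an explicit basis of $\cE_{n/2}(\Gamma_1(4))\oplus\cS^{cm}_{n/2}(\Gamma_1(4))$ (at most four forms) and shows by a $4\times 4$ determinant of low-order $q$-coefficients that $\theta_n$ is not in their span for $n>10$. Either route works; yours is closer to the second, but you should replace the Deligne reference with the explicit Eisenstein formula and the Bernoulli/Euler asymptotics.
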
 

For the listed exceptional cases, the following formulas for the Fourier coefficients $r_n(m)$ were produced originally by Jacobi (for $n = 2, 4, 6, 8$) and Liouville ($n = 10$) in the 19th century (see \cite{jacobi} and \cite{liouville1864}). Let $\chi_4$ denote the nontrivial Dirichlet character of conductor 4 extended multiplicatively to all positive integers.

\begin{eqnarray} 
	r_2(m) &=& 4 \left(\sum_{d \mid m}\chi_4(d)\right). \\
	r_4(m) &=&   8 \left(\sum_{\stackrel{d \text{ odd}}{d \mid m}} d\right) + 16 \left(\sum_{\stackrel{d \text{ odd}}{d \mid \frac{m}{2}}} d \right). \\
	r_6(m) &=& 16\left(\sum_{d \mid m} \chi_4\left(\frac{m}{d}\right)d^2 \right) - 4\left(\sum_{d \mid m} \chi_4(d)d^2 \right). \\
	r_8(m) &=& 16 \left( \sum_{d \mid m} d^3\right) - 32 \left( \sum_{d \mid \frac{m}{2}} d^3 \right) + 256 \left( \sum_{d \mid \frac{m}{4}} d^3 \right). \\
	r_{10}(m) &=& \frac{4}{5} \left(\sum_{d \mid m} \chi_4(d)d^4 \right) + \frac{64}{5}\left(\sum_{d \mid m} \chi_4\left(\frac{m}{d}\right) d^4 \right) + \frac{8}{5} \left(\sum_{\stackrel{\fd \in \bZ[i]}{\Nm(\fd) = m}} \fd^4 \right).  \end{eqnarray}
	
	It has been known that for even $n < 10$, the theta function is purely Eisenstein, and therefore elementary (see \cite{rankin}). For even $n > 8$, Liouville, Glaisher, and Ramanujan tried to give formulas in the same vein as those given for $r_2, r_4, r_6,$ and $r_8$; however, they began to involve various arithmetic and analytic functions for the cuspidal coefficients which were exceedingly difficult to compute, even for small $m$ (see \cite{glaisher} and \cite{ramanujan}). General formulas for all even $n$ were also remarkably difficult due to the contribution from the cusp form, though the study of the Eisenstein part of $\theta_n(q)$ allowed for understanding of the general growth of these functions (see \cite{milnes}). However, as we will show, CM cusp forms have coefficients given by a Hecke character on an associated imaginary quadratic field, hence it is still possible to have log-polynomial computable coefficients when the cuspidal part of $\theta_n(z)$ has CM, i.e. elementary formulas can exist for $r_n(m)$ when $n > 8$. 
To prove the main theorem, we compute the dimensions of $\cE_{n/2}(\Gamma_1(4)) \oplus \cS^{cm}_{n/2}(\Gamma_1(4))$. It turns out that the first cusp form on $\Gamma_1(4)$ is of weight 5 and is invariant under twisting by the quadratic character attached to $\bQ(i)$; in fact, $\cS^{cm}_5(\Gamma_1(4)) = \cS_5(\Gamma_1(4))$. Thus, we can already conclude that $\theta_2, \theta_4, \theta_6, \theta_8,$ and $\theta_{10}$ are elementary since they are modular forms on $\Gamma_1(4)$ of weight less than or equal to 5.  

For the converse, we first show that $\cS_{n/2}^{cm}(\Gamma_1(4))$ is one-dimensional when $\frac{n}{2} \equiv 1 \bmod{4}$ and trivial otherwise. Since the Riemann-Roch formula implies that $\cE_{n/2}(\Gamma_1(4))$ has dimension at most 3, we only need to show that the modular form $\theta_n$ is not in the span of a basis of (at most 3) Eisenstein series and 1 CM eigenform (if $\frac{n}{2} \equiv 1\bmod{4}$). This can be done by constructing bases for $\cE_{n/2}(\Gamma_1(4)) \oplus \cS^{cm}_{n/2}(\Gamma_1(4))$ and comparing coefficients. Thus, $\theta_n$ is only elementary in the exceptional cases where the entire cuspidal subspace of $\cM_5(\Gamma_1(4))$ is CM. Note that for $n > 10$, $\cS^{cm}_{n/2}(\Gamma_1(4)) \subsetneq \cS_{n/2}(\Gamma_1(4))$.

We can also prove the backwards direction of Theorem \ref{main} by looking at the action of certain Hecke operators on the CM cuspidal subspace. In particular, we can show that the Hecke action on the cuspidal part of $\theta_n(q)$ is not the same as it would be on an element of $\cS^{cm}_{n/2}(\Gamma_1(4))$ when $n > 10$.

Section 1 describes cusp forms with complex multiplication and pertinent properties, including their relation to Hecke characters of imaginary quadratic fields. In Section 2, we show that $\theta_n(q)$ is elementary for even $n \leq 10$ by a dimension argument. In Section 3, we show that $\theta_n(q)$ for $n > 10$ cannot be elementary by comparing it with the Eisenstein and CM cuspidal subspaces of $\cM_{n/2}(\Gamma_1(4))$ concluding the first proof of Theorem \ref{main}. In Section 4, we produce another proof of the fact that $\theta_n(q)$ is not elementary for all $n >10$ and even by contrasting the action of the Hecke operator $T_3$ on the CM cuspidal space with its action on $\theta_n$. Finally, in Section 5, we discuss the context of this result and the notion of elementary in modern computations of Fourier coefficients of modular forms. 

The author is grateful to the mathematics department at Leiden University for their hospitality during this research. In particular, the author thanks Prof. Bas Edixhoven for the proposal of this problem as well as many valuable discussions. She also thanks Lenny Taelman and Peter Bruin, who were also extremely helpful in providing many suggestions and advice, as well as the reviewers for their comments. This research was completed using funding from the U.S. Fulbright Grant and the HSP Huygens Fellowship. 

\section{Cusp forms with complex multiplication} \label{cm}

We recall the theory of CM cusp forms, and describe their relation to Hecke characters of imaginary quadratic fields. For a more detailed explanation, see \cite{ribet}. 

Fix positive integers $k$ and $N$, and let $\varepsilon$ be a Dirichlet character on $(\bZ/N\bZ)^\times$ such that $\varepsilon(-1) = (-1)^k$. Recall that $\cS_k(\Gamma_0(N), \varepsilon)$ is the $\varepsilon$-eigenspace of the diamond operator $\langle d \rangle$ where $d \in (\bZ/N\bZ)^\times$ (recall that $\Gamma_0(N)/\Gamma_1(N) \cong (\bZ/N\bZ)^\times$ and the diamond operators represent the action of $\Gamma_0(N)$ on $\cS_k(\Gamma_1(N))$). For a normalized eigenform $f \in \cS_k(\Gamma_0(N),\varepsilon)$, let $K_f$ be the number field generated by its Hecke eigenvalues. Note that $K_f$ is either totally real or has complex multiplication, and it contains the image of the Nebentypus $\varepsilon$. It is real if and only if $\varepsilon$ factors through $\{\pm 1\} \subseteq \bC^\times$ and
	$$\varepsilon(p) a_p = a_p \quad \forall \mbox{ primes } p \nmid N,$$
where $a_p$ denotes the coefficient of $q^p$ in the Fourier expansion of $f$ (see Prop 3.2, \cite{ribet}).

For a newform $f \in \cS_k(\Gamma_0(N), \varepsilon)$, we can twist by a Dirichlet character $\varphi$ defined on $(\bZ/D\bZ)^\times$ as follows: 
	$$f \otimes \varphi = \sum_{n=1}^\infty \varphi(n) a_nq^n \in \cS_k(\Gamma_0(ND^2),\varepsilon\varphi^2).$$
Moreover, $f \otimes \varphi$ is an eigenform since the action of the Hecke operator $T_p$ for $p \nmid ND$ has eigenvalue $\varphi(p)a_p$. 

An eigenform $f$ has \emph{complex multiplication} (or CM) by $\varphi$ if $f \otimes \varphi = f$. One must check that $\varphi(p)a_p = a_p$ (equivalently, either $\varphi(p) = 1$ or $a_p = 0$) for a set of primes of density 1 in order to conclude that $f(z) = \sum_{n \geq1} a_nq^n$ has CM by $\varphi$. Furthermore, this implies that $\varepsilon \varphi^2 = \varepsilon$, so $\varphi$ must be a quadratic character.

\noindent \textbf{Remark.} Using $\Gamma_1(N)$, we can define the notion of a CM cusp form on all $\Gamma$, namely in the direct limit over all levels. 

The theory of $\ell$-adic representations gives a way to generate cusp forms on $\Gamma_1(N)$. In particular, Hecke characters of imaginary quadratic fields give rise to CM cusp forms.

\subsection{Cusp forms attached to Hecke characters}

To every algebraic Hecke character $\psi$, we can attach a cusp form in a canonical way by describing the $q$-series expansion in terms of the values the character takes outside of primes dividing the conductor. More precisely, if $\psi$ has conductor $\ff$ and $\infty$-type $t$ on an imaginary quadratic field $K$, define the $q$-series 
\begin{equation} \label{CM form}
	f_{K,\psi}(z) = \sum_{\tiny \begin{matrix} \fa \text{ integral} \\ \text{coprime to $\ff$}\end{matrix}} \psi(\fa)\cdot q^{\Nm(\fa)} \quad (q = e^{2\pi i z}, \Im(z) > 0). \end{equation}
This is well-defined as a function on the $z$-coordinate from the upper half plane to $\bC$. One can further check that it has certain symmetric properties that allow $f_{K,\psi}$ to be viewed as a modular form. 

Let $K$ and $\psi$ be as above, let $\omega_\psi$ be the Dirichlet character of conductor $\Nm(\ff)$ attached to $\psi$, defined by
	 $$\omega_\psi(a) = \psi((a))/a^t \quad \forall a \in \bZ.$$ Note that $t \in \bZ_{>0}$ due to algebraicity. Furthermore, let $\chi_K$ be the quadratic character attached to $K$, defined by the Kronecker symbol, $$\chi_K(p) = \ls{d_K}{p},$$ for odd primes $p$ not dividing the discriminant $d_K$, and extended to $\bZ$ in the usual way. 
\begin{theorem}[Hecke \cite{hecke}, Shimura \cite{jacobianshimura} \& \cite{realshimura}] \label{hs} The $q$-series $f_{K,\psi}(z)$ is a newform of weight $t + 1$ and level $d\cdot \Nm(\ff)$ in the space 
	$$f_{K,\psi} \in \cS_{t+1}(\Gamma_0(d\cdot \Nm(\ff)), \chi_K\cdot \omega_\psi)) \subset \cS_{t+1}(\Gamma_1(d \cdot \Nm(\ff))).$$ 
In addition, distinct cusp forms $f_{K,\psi}$ arise from distinct pairs $(K, \psi)$. 
\end{theorem}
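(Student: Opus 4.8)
The plan is to realize $f_{K,\psi}$ as a linear combination of classical theta series with harmonic coefficients, one for each ideal class of $K$, and then read off all the asserted invariants from the transformation law of such theta series. First I would group the defining sum over integral ideals $\fa$ coprime to $\ff$ according to the ideal class of $\fa$ in $K$. Fixing an integral ideal $\fb$ in the inverse class and writing $\fa = \alpha\fb^{-1}$ with $\alpha$ ranging over representatives of $(\fb\setminus\{0\})/\cO_K^\times$ subject to the congruence conditions modulo $\ff$, the identity $\Nm(\fa) = |\alpha|^2/\Nm(\fb)$ together with the $\infty$-type relation $\psi((\alpha)) = \alpha^t$ turns each class-piece into a sum of the form $\sum_{\alpha\in\Lambda}\alpha^t q^{Q(\alpha)}$, where $\Lambda$ is a lattice (a coset of $\fb$), $Q$ is the positive-definite binary quadratic form $|\alpha|^2/\Nm(\fb)$, and $P(\alpha) = \alpha^t$ is a harmonic polynomial of degree $t$ for $Q$. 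By the classical theory of theta series attached to a positive-definite form and a spherical harmonic of degree $t$ (Hecke; Schoenberg), each such piece is a modular form of weight $t+1$ (the harmonic degree $t$ plus half the rank of $Q$, which is $2/2 = 1$); because $P$ is nonconstant the theta series has no constant term and vanishes at every cusp, so the sum is a cusp form of weight $t+1$.

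The level and nebentypus come out of the explicit transformation formula for the harmonic theta series under the generators of the relevant $\Gamma_0$. Here the careful bookkeeping of the Gauss sum and the determinant of $Q$ is needed: the discriminant governs the level, producing exactly $d\cdot\Nm(\ff)$ with $d = |d_K|$, while the multipliers attached to the translation and inversion matrices produce the nebentypus $\chi_K\cdot\omega_\psi$, with $\chi_K$ (the Kronecker symbol of the discriminant) arising from the quadratic reciprocity / Gauss-sum factor and $\omega_\psi$ from the values of $\psi$ on the congruence classes modulo $\ff$. Alternatively, one can bypass the explicit theta transformation by forming the Hecke $L$-function $L(s,\psi) = \sum_\fa \psi(\fa)\Nm(\fa)^{-s}$, establishing its analytic continuation and functional equation by Hecke's Mellin-transform argument, verifying the twisted functional equations for $L(s,\psi\otimes\chi)$ over Dirichlet characters $\chi$, and invoking Weil's converse theorem; this route yields the weight, level and nebentypus directly from the conductor and $\varepsilon$-factor of the completed $L$-function.

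To see that $f_{K,\psi}$ is a Hecke eigenform and in fact a newform, I would use that multiplicativity of $\psi$ on ideals gives the Euler product $L(s,\psi) = \prod_\fp (1-\psi(\fp)\Nm(\fp)^{-s})^{-1}$, which regroups over rational primes into degree-two local factors of exactly the shape of the $L$-function of a weight-$(t+1)$ eigenform: at a split prime $p = \fp\ol\fp$ the coefficient is $a_p = \psi(\fp)+\psi(\ol\fp)$, at an inert prime $a_p = 0$, and at the ramified primes the local factors are at most linear. Matching $\sum_n a_n n^{-s} = L(s,\psi)$ shows $f_{K,\psi}$ is normalized with $a_1 = 1$ and is a simultaneous eigenform. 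That the level is exactly $d\cdot\Nm(\ff)$—so that $f_{K,\psi}$ is new rather than old—follows by identifying $f_{K,\psi}$ with the form attached to the induced two-dimensional representation $\Ind_K^{\bQ}\psi$ of $\Gal(\ol{\bQ}/\bQ)$ and computing its Artin conductor via the conductor-discriminant formula, which equals $|d_K|\cdot\Nm(\ff)$.

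Finally, injectivity of $(K,\psi)\mapsto f_{K,\psi}$ is recovered from the Fourier coefficients: the set of primes with $a_p = 0$ has density $1/2$ and is exactly the set of inert primes of $K$, which determines $\chi_K$ and hence $K$; the remaining nonzero coefficients at split primes recover the values $\psi(\fp)+\psi(\ol\fp)$, and multiplicativity together with a density argument pins down $\psi$ itself. I expect the main obstacle to be the second paragraph: pinning down the \emph{exact} level $d\cdot\Nm(\ff)$ and the precise nebentypus $\chi_K\cdot\omega_\psi$ (equivalently, showing the form is new at this level) requires the delicate Gauss-sum and conductor computation, whereas establishing mere modularity at some level is comparatively routine.
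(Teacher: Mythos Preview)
The paper does not prove this theorem. It is stated with attribution to Hecke and Shimura and cited to the literature; the paper then immediately records a corollary and uses the result as a black box for the remainder of the argument. So there is no in-paper proof against which to compare your proposal.

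That said, your sketch is a faithful outline of the classical argument(s): the decomposition of $f_{K,\psi}$ into theta series with harmonic polynomial $\alpha^t$ attached to the norm form on each ideal class is exactly Hecke's construction, and your alternative via the functional equation of $L(s,\psi)$ together with Weil's converse theorem is the standard modern route (and is in fact the device the paper itself invokes later to certify that various explicit $q$-series are modular). You correctly locate the nontrivial content in pinning down the \emph{exact} level and establishing newness; the cleanest way through is, as you say, the Artin-conductor computation for $\Ind_K^{\bQ}\psi$ via the conductor--discriminant formula. One small caveat on your injectivity argument: the split-prime coefficients recover only the unordered pair $\{\psi(\fp),\psi(\ol\fp)\}$, so strictly speaking you recover $\psi$ only up to precomposition with the nontrivial element of $\Gal(K/\bQ)$; the statement should be read with that identification in mind.
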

Thus, given a weight $k$, level $N$, and Nebentypus $\varepsilon$, we can construct a basis for the space of CM cusp forms using the criterion in the above theorem.
\begin{corollary}\label{CM basis} For any positive integer $r$, 
	$$f_{K,\psi}(r\cdot z) = \sum_{\fa} \psi(\fa)\cdot q^{r \cdot\Nm(\fa)} \in \cS_{t+1}(\Gamma_0(N),\varepsilon) \quad \Longleftrightarrow \quad r\cdot d \cdot \Nm(\ff) \mid N \mbox{  and  } \chi_K \cdot \omega_\psi = \varepsilon$$ 
if $\psi$ has $\infty$-type $t$. The second equality takes place while viewing $\chi_K$, $\omega_\psi$ and $\varepsilon$ as characters on $\bZ$. Thus, for all primes $p \nmid N$, $\chi_K(p) \cdot \omega_\psi(p) = \chi(p)$.  \end{corollary}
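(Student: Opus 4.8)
The plan is to prove Corollary \ref{CM basis} as a direct consequence of Theorem \ref{hs}, treating it essentially as a bookkeeping translation of the newform-level criterion into the language of oldform embeddings and matching of Nebentypus. The underlying principle is standard: a newform $f_{K,\psi}$ of level $M := d\cdot\Nm(\ff)$ gives rise to oldforms $f_{K,\psi}(r\cdot z)$ of level $rM$ for each positive integer $r$, and such an oldform lives in $\cS_{t+1}(\Gamma_1(N))$ precisely when $rM \mid N$. The extra content of the corollary is that it must also land in the $\varepsilon$-eigenspace $\cS_{t+1}(\Gamma_0(N),\varepsilon)$, which forces the Nebentypus of $f_{K,\psi}$ to agree with $\varepsilon$; by Theorem \ref{hs} that Nebentypus is exactly $\chi_K\cdot\omega_\psi$.

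The key steps, in order, are as follows. First I would verify the $q$-expansion identity: substituting $r\cdot z$ for $z$ in \eqref{CM form} replaces $q = e^{2\pi i z}$ by $q^r$, giving $f_{K,\psi}(r\cdot z) = \sum_{\fa}\psi(\fa)q^{r\Nm(\fa)}$ as claimed. Second, I would invoke Theorem \ref{hs} to place $f_{K,\psi} \in \cS_{t+1}(\Gamma_0(M),\chi_K\cdot\omega_\psi)$ with $M = d\cdot\Nm(\ff)$, so that $f_{K,\psi}$ has weight $t+1$ matching the target weight. Third, I would apply the standard level-raising (oldform) inclusion: for a modular form $g \in \cM_k(\Gamma_1(M))$ and any positive $r$, the form $g(r\cdot z) \in \cM_k(\Gamma_1(rM))$, and more precisely the diamond action is unchanged, so $g(r\cdot z)$ retains the same Nebentypus. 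Combining this with the elementary divisibility fact that $\Gamma_1(N) \subseteq \Gamma_1(rM)$ and $\Gamma_0(N)\subseteq\Gamma_0(rM)$ whenever $rM \mid N$ yields the forward implication: if $r\cdot d\cdot\Nm(\ff) \mid N$ and $\chi_K\cdot\omega_\psi = \varepsilon$ as characters on $\bZ$ (equivalently, as characters mod $N$ after the natural reduction), then $f_{K,\psi}(r\cdot z) \in \cS_{t+1}(\Gamma_0(N),\varepsilon)$.

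For the converse, I would argue that membership in $\cS_{t+1}(\Gamma_0(N),\varepsilon)$ already forces both conditions. The level condition $rM \mid N$ follows because the $q$-expansion $\sum_\fa \psi(\fa)q^{r\Nm(\fa)}$ is supported on exponents divisible by $r$, and the newform $f_{K,\psi}$ underlying it has exact level $M$; standard theory of oldforms (the conductor of the underlying newform divides the level, and the $r$-scaling is detected by the support) pins down $rM \mid N$. The Nebentypus condition is then read off directly: since the diamond operators act on $f_{K,\psi}(r\cdot z)$ through the same character $\chi_K\cdot\omega_\psi$ as on $f_{K,\psi}$, lying in the $\varepsilon$-eigenspace requires $\chi_K\cdot\omega_\psi = \varepsilon$. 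The final sentence of the corollary, that $\chi_K(p)\cdot\omega_\psi(p) = \chi(p)$ for all primes $p\nmid N$, is then immediate by evaluating the character equality at unramified primes.

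The main obstacle I anticipate is the converse direction, specifically the careful justification that the oldform $f_{K,\psi}(r\cdot z)$ lies in $\cS_{t+1}(\Gamma_0(N),\varepsilon)$ \emph{only} when $rM \mid N$ rather than under some weaker condition. This requires appealing to the exactness of the newform level in Theorem \ref{hs} (the statement that $f_{K,\psi}$ is a newform of level precisely $d\cdot\Nm(\ff)$, not merely a form of that level) together with the Atkin--Lehner theory of oldforms, which guarantees that an eigenform at level $N$ arising from scaling a newform by $r$ contributes to $\cS_k(\Gamma_1(N))$ exactly when $r$ times the newform's conductor divides $N$. Everything else is a routine unwinding of definitions, but this divisibility claim is where the real content of Theorem \ref{hs} — that distinct $(K,\psi)$ yield distinct newforms of the stated exact level — must be used.
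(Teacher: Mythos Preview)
Your proposal is correct and matches the paper's approach: the paper states Corollary \ref{CM basis} without proof, treating it as an immediate consequence of Theorem \ref{hs} together with the standard oldform inclusion $g(z)\mapsto g(rz)$ and the fact that this scaling preserves the Nebentypus. The details you supply (the $q$-expansion substitution, the level-raising map, the diamond-operator compatibility, and the appeal to Atkin--Lehner theory for the converse divisibility) are exactly the routine verifications the paper leaves to the reader.
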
 

Note that the cusp forms $f_{K,\psi}(r\cdot z)$ have CM by $\chi_K$. (By construction, the coefficient of $q^p$ in the $q$-series expansion of $f_{K,\psi}(r\cdot z)$ is 0 if no ideal of $K$ has norm equal to $p$. Since $\chi_K(p) = -1$ exactly when this holds for $p$, $a_p = \chi_K(p) a_p$.) However, it is not at all obvious that these are all the cusp forms with complex multiplication in $\cS_k(\Gamma_1(N))$. Using the theory of Galois representations, Ribet proved that a newform $f$ has CM by an imaginary quadratic field $K$ if and only if it arises from a Hecke character on $K$ (see 4.4 and 4.5 in \cite{ribet}). 

\section{The exceptional cases of $n < 10$} \label{exceptional}

One can try to show from the classical formulas for $r_n(m)$ that $\theta_n(q)$ are elementary when $n < 12$ and even. Instead, simply considering the dimension of the first few weighted spaces of modular forms on $\Gamma_1(4)$ allows us to make the same conclusion that $n = 2, 4, 6, 8,$ and 10 are elementary. From the Riemann-Roch formula specialized to $\Gamma_1(4)$, we have the following dimension formulas (see \cite{ds}, \cite{miyake}, or \cite{di} for proof): 

\begin{lemma} \label{dim} 
The dimensions of $\cM_k(\Gamma_1(4))$ and its subspace of cusp forms for each $k \in \bZ_{> 0}$ are as follows: 
$$\begin{array}{c c}
\dim_{\bC}(\cM_k(\Gamma_1(4))) = 
	\begin{cases} \frac{k+2}{2} & \mbox{ if $k$ is even }\\
                               \frac{k+1}{2} & \mbox{ if $k$ is odd,}
        \end{cases} & 
\dim_{\bC}(\cS_k(\Gamma_1(4))) = 
	\begin{cases} 0 & \mbox{ if $k \leq 4$} \\
                               \frac{k-4}{2} & \mbox{ if $k\geq 3$ is even }\\
                               \frac{k-3}{2} & \mbox{ if $k \geq 3$ is odd.}\\ \end{cases}\end{array}$$ \end{lemma}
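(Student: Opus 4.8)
The plan is to derive both formulas from the general dimension formula for spaces of modular forms on a congruence subgroup, specialized to $\Gamma_1(4)$; this is exactly the Riemann--Roch computation alluded to in the statement (see \cite{ds}, \S3.5--3.6, \cite{miyake}, or \cite{di}). Once the relevant geometric invariants of the modular curve $X_1(4)$ are in hand, both dimensions follow by substitution, so the real work is in pinning down those invariants and in handling the smallest weights, where the generic formula fails.

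First I would record that $-I \notin \Gamma_1(4)$, since $-I \equiv \left(\begin{smallmatrix} 3 & 0 \\ 0 & 3 \end{smallmatrix}\right) \pmod 4$ is not of the form $\left(\begin{smallmatrix} 1 & * \\ 0 & 1\end{smallmatrix}\right)$. Consequently the image of $\Gamma_1(4)$ in $\PSL_2(\bZ)$ has index $6$, and I would analyze the degree-$6$ cover $X_1(4) \to X(1) \cong \bP^1$ by Riemann--Hurwitz. Using that $\Gamma_1(4)$ is torsion-free (so $X_1(4)$ has no elliptic points, $\varepsilon_2 = \varepsilon_3 = 0$) and that the three cusps, represented by $\infty$, $0$, $1/2$, have widths summing to the degree $6$, the ramification over the order-$2$ and order-$3$ elliptic points of $X(1)$ and over its cusp totals $3 + 4 + 3 = 10$, so $2 - 2g = 6 \cdot 2 - 10 = 2$, giving genus $g = 0$ and $\varepsilon_\infty = 3$. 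Because $-I \notin \Gamma_1(4)$, the odd-weight count further requires splitting the cusps into regular and irregular ones; $\Gamma_1(4)$ is the standard example possessing an irregular cusp, and I would verify $\varepsilon_\infty^{\mathrm{reg}} = 2$, $\varepsilon_\infty^{\mathrm{irr}} = 1$.

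Substituting these data (with the elliptic terms vanishing) then yields everything. For even $k \geq 4$ one gets $\dim \cM_k(\Gamma_1(4)) = (k-1)(g-1) + \tfrac{k}{2}\varepsilon_\infty = -(k-1) + \tfrac{3k}{2} = \tfrac{k+2}{2}$ and $\dim \cS_k(\Gamma_1(4)) = (k-1)(g-1) + (\tfrac{k}{2}-1)\varepsilon_\infty = \tfrac{k-4}{2}$, so the Eisenstein space has dimension $\varepsilon_\infty = 3$. For odd $k \geq 3$, accounting for the single irregular cusp separately from the two regular ones, the analogous formula simplifies to $\dim \cM_k(\Gamma_1(4)) = \tfrac{k+1}{2}$ and $\dim \cS_k(\Gamma_1(4)) = \tfrac{k-3}{2}$, with the Eisenstein dimension now equal to the number of regular cusps, namely $2$. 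In particular the borderline weights $k = 3, 4$ both return $\dim \cS_k = 0$, matching the clause ``$k \leq 4$,'' and the bound $\dim \cE_k \leq 3$ invoked later in the paper is already visible here.

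The delicate points are the two smallest weights, where the generic Riemann--Roch output is unreliable. For $k = 2$ I would use the special form of the formula at the canonical-bundle borderline, giving $\dim \cM_2 = g + \varepsilon_\infty - 1 = 2$ and $\dim \cS_2 = g = 0$. Weight $k = 1$ is the real obstacle: the associated line bundle has degree too small for Riemann--Roch to determine $\dim \cM_1$ directly, since the higher cohomology does not formally vanish. Here I would instead argue by hand that there are no weight-one cusp forms on $\Gamma_1(4)$ (consistent with the first cusp form occurring in weight $5$) and that the weight-one Eisenstein space is one-dimensional, spanned by $\theta_2$ itself, so that $\dim \cM_1 = 1 = \tfrac{k+1}{2}$. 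Besides weight $1$, the only subtlety demanding care is the regular/irregular classification of the cusps, since the odd-weight formulas are genuinely sensitive to it; getting that split correct is what makes the odd-weight dimensions come out as stated.
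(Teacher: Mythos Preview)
Your proposal is correct and follows exactly the route the paper indicates: the paper does not actually prove this lemma but simply states that it follows from the Riemann--Roch formula specialized to $\Gamma_1(4)$, citing \cite{ds}, \cite{miyake}, and \cite{di}. You have carried out that very computation---correctly identifying $g=0$, $\varepsilon_2=\varepsilon_3=0$, $\varepsilon_\infty=3$ with one irregular cusp, and handling the low weights $k=1,2$ separately---so your argument is precisely the content behind the paper's citation.
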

                               
In particular, this implies that $\cM_k(\Gamma_1(4))$ is entirely Eisenstein for $k < 5$, and $\cS_5(\Gamma_1(4))$ has dimension 1. Thus, the backwards direction of Theorem \ref{main} follows from proving that the generator of $\cS_5(\Gamma_1(4))$ is a CM cusp form. By Corollary \ref{CM basis}, the CM newform attached to the Hecke character of trivial conductor and $\infty$-type 4 on $\bQ(i)$ 
	\begin{eqnarray*} f_{\bQ(i),\psi}(z) &=& \sum_{\fa \text{ integral}} \psi(\fa)\cdot q^{\Nm(\fa)} \quad \quad \mbox{where} \quad \psi((\alpha)) = \alpha^{4}, \\
					              &=& \frac{1}{4} \sum_{m \geq 1}\left(\sum_{\stackrel{\fd \in \bZ[i]}{\Nm(\fd) = m}} \fd^{4} \right) e^{2\pi i mz}. \end{eqnarray*}
It is an element of $\cS_{5}(\Gamma_0(4),\chi_4) = \cS_5(\Gamma_1(4))$, where $\chi_4$ denotes the nontrivial Dirichlet character of conductor 4. Thus, since $f_{\bQ(i),\psi}$ is a newform, it is the generator of the entire cuspidal space, hence $\cS_5(\Gamma_1(4)) = \cS^{cm}_5(\Gamma_1(4))$. 

From the above argument and the fact that $\theta_n(q) \in \cM_{n/2}(\Gamma_1(4))$, we can therefore conclude that $\theta_2(q)$, $\theta_4(q)$, $\theta_6(q)$, $\theta_8(q)$, and $\theta_{10}(q)$ are elementary.

\section{$\theta_n(q)$ is not elementary for $n > 10$}

The previous section illustrates that $\theta_n(q)$ is elementary if $n = 2, 4, 6, 8$, and 10 using the dimension of $\cM_k(\Gamma_1(4))$ for $\frac{n}{2} = k \leq 5$. However, Lemma \ref{dim} also implies that $\cE_k(\Gamma_1(4))$ has dimension 3 for even $k > 2$ and dimension 2 for odd $k > 2$. We can go further and prove the entire theorem using a dimension argument via the following lemma. 

\begin{lemma} \label{hecke} 
The dimension of $\cS^{cm}_k(\Gamma_1(4))$ is $1$ if $k \equiv 1 \bmod 4$ for $k \geq 5$ and 0 otherwise. \end{lemma}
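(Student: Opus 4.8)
The plan is to use the explicit description of CM cusp forms from Section \ref{cm}: by Ribet's theorem every CM newform on $\Gamma_1(4)$ arises from a Hecke character of an imaginary quadratic field $K$, and by Corollary \ref{CM basis} the forms $f_{K,\psi}(r\cdot z)$ that land in $\cS_k(\Gamma_1(4))$ are exactly those for which $r\cdot |d_K|\cdot \Nm(\ff) \mid 4$, where $\ff$ is the conductor of $\psi$ and $t = k-1$ is its $\infty$-type. So the entire problem reduces to enumerating the admissible triples $(K,\psi,r)$ subject to this single divisibility constraint and then checking linear independence of the resulting forms.

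First I would pin down the field. Since $r \geq 1$ and $\Nm(\ff) \geq 1$, the constraint forces $|d_K| \mid 4$. Among imaginary quadratic fields the only fundamental discriminant with $|d_K|\mid 4$ is $d_K = -4$, i.e. $K = \bQ(i)$: the next candidate $\bQ(\sqrt{-3})$ is excluded because $3 \nmid 4$, and every other field has $|d_K| \geq 7$. With $|d_K| = 4$ the constraint collapses to $r\cdot \Nm(\ff) = 1$, so $r = 1$ and $\ff = (1)$ is trivial. Thus the only CM forms on $\Gamma_1(4)$ come from Hecke characters of $\bQ(i)$ of trivial conductor.

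Next I would classify those characters. Because $\bQ(i)$ has class number $1$ and trivial ray class group at conductor $(1)$, a Hecke character of trivial conductor and $\infty$-type $t$ is forced to be $\psi((\alpha)) = \alpha^t$, with no finite-order twist available. For this to be well-defined on ideals one needs $\psi((u\alpha)) = \psi((\alpha))$ for every unit $u$; since the units of $\bZ[i]$ are the fourth roots of unity $\{\pm 1, \pm i\}$, this is equivalent to $u^t = 1$ for all $u \in \mu_4$, i.e. $4 \mid t$. Hence the admissible $\infty$-types are exactly $t \in \{4,8,12,\dots\}$, giving weights $k = t+1 \in \{5,9,13,\dots\}$, precisely $k \equiv 1 \bmod 4$ with $k \geq 5$. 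This matches the Nebentypus bookkeeping: here $\omega_\psi$ is trivial, so $\chi_K\cdot\omega_\psi = \chi_4$, which has $\chi_4(-1) = -1 = (-1)^k$ exactly when $k$ is odd.

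For each admissible weight there is a unique such character, since the complex-conjugate character $\bar\psi$ yields the same $q$-series (the coefficient $\sum_{\Nm \fa = m}\psi(\fa)$ is stable under conjugation of ideals), so the corresponding $f_{\bQ(i),\psi}$ is the single generator of $\cS^{cm}_k(\Gamma_1(4))$ and the dimension is $1$. For every other $k$ — even $k$, or $k \equiv 3 \bmod 4$ — the $\infty$-type $t = k-1$ fails $4 \mid t$, no admissible triple exists, and the dimension is $0$. The main obstacle I anticipate is the Hecke-character bookkeeping in this last step: I must make sure the unit condition is applied correctly and that no oldform scalings ($r > 1$) or finite-order twists contribute extra independent forms, so that the count is exactly one and not more. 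Everything else is a short finiteness check against the divisibility bound.
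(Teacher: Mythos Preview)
Your proposal is correct and follows essentially the same route as the paper: both arguments use Corollary~\ref{CM basis} to force $K=\bQ(i)$ via the discriminant divisibility $|d_K|\mid 4$, then reduce to trivial conductor and $r=1$, invoke the unit group $\mu_4\subset\bZ[i]^\times$ to obtain the constraint $4\mid t$ on the $\infty$-type, and finally use that $\bZ[i]$ is a PID to see the resulting Hecke character is unique for each admissible $t$. Your remark that $\bar\psi$ yields the same $q$-series is true but redundant once you have already argued uniqueness of $\psi$ from the PID structure; the paper simply omits it.
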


\begin{proof} Assume $N = 4$. The two Dirichlet characters on $(\bZ/4\bZ)^\times$ are the trivial character ${\bf 1}$ and the primitive character of conductor 4, $\chi_4$. Recall that the decomposition of $\cS_k(\Gamma_1(N)) = \bigoplus_{\varepsilon} \cS_k(\Gamma_0(N),\varepsilon)$
occurs over all characters defined modulo $N$ such that $\varepsilon(-1) = (-1)^k$, thus
	$$\cS_k(\Gamma_1(4)) = \begin{cases} \cS_k(\Gamma_0(4), {\bf 1}) & \mbox{ if $k$ is even} \\
								  \cS_k(\Gamma_0(4), \chi_4) & \mbox{ if $k$ is odd.} \end{cases}$$
Let $\varepsilon$ denote the correct Dirichlet character such that $\cS_k(\Gamma_1(4)) = \cS_k(\Gamma_0(4), \varepsilon)$ depending on the parity of $k$. We first want to understand the CM subspace $\cS^{cm}_k(\Gamma_0(4),\varepsilon)$ by considering the newforms which generate it according to Corollary \ref{CM basis}.

For a newform $f_{K,\psi}(r \cdot z) \in \cS^{cm}_k(\Gamma_0(4),\varepsilon)$, the imaginary quadratic field $K$ must have discriminant $d_K$ such that $d_K \mid 4$. Thus, $K = \bQ(i)$ since there is no field of discriminant $-2$. Note that $\chi_K$ is a nontrivial Dirichlet character of conductor 4, so $\chi_K = \chi_4$.

Since the discriminant of $K$ is $-4$, we are left with finding all Hecke characters $\psi$ on $K$ of conductor $1$ such that $\omega_\psi = {\bf 1}$ if the $\infty$-type $t$ of $\psi$ is even, and $\omega_\psi = \chi_4$ if $t$ is odd. By definition, if $\alpha \in K^\times$ then since $(\alpha) = (u \alpha)$ for all $u \in \cO_K^\times$,
	$$\psi((\alpha)) = \alpha^t = (u \alpha)^t  = \psi((u \alpha)).$$ 
Since $\cO_K^\times = \langle i \rangle$, we can conclude that $4 \mid t$, since if $u = \pm i$, $i^t  = 1$ if and only if $t \equiv 0 \bmod{4}$. In particular, there are no Hecke characters of odd $\infty$-type, hence $\cS_k^{cm}(\Gamma_0(4),\varepsilon)$ is trivial for even $k$. In addition, $\cS_k^{cm}(\Gamma_0(4),\varepsilon)$ can only be non-trivial when $k \equiv 1 \pmod{4}$ and $k > 1$. 

We now prove that there is a unique Hecke character with the properties described above for each $\infty$-type $t \in \bZ$ such that $4 \mid t$. Let $t$ be fixed such that $t \equiv 0 \pmod{4}$. Then $\chi_K \cdot \omega_\psi = \chi_4$ for any $\psi$, hence the attached character $\omega_\psi$ must be trivial. Thus, since $\cO_K = \bZ[i]$ is a PID, the algebraic Hecke characters on $K$ are exactly those defined as 
	$$\psi_t: (\alpha) \longmapsto \alpha^t \quad \forall \alpha \in K^\times \quad \quad \mbox{where $t \in \bZ$ and $4 \mid t$.}$$
Using Corollary \ref{CM basis}, we conclude that the dimension of $S_k^{cm}(\Gamma_0(4),\varepsilon) \subseteq S_k(\Gamma_1(4))$ is 1 if $k \equiv 1 \bmod 4$ and $k \geq 5$ and is $0$ otherwise. \end{proof}

\subsection{Proof of Theorem \ref{main} via a dimension argument} 

We want to show that one must calculate the coefficients of a non-CM cusp form in order to calculate the representation number for each integer by the quadratic form $x_1^2 + ... + x_n^2$ when $n > 10$ and even. To do this, we first construct a canonical basis for the space $\cE_{n/2}(\Gamma_1(4)) \oplus \cS_{n/2}^{cm}(\Gamma_1(4))$. 

Consider first if the weight $\frac{n}{2} = k > 2$ is even. Then $\cM_k(\Gamma_1(4)) = \cE_k(\Gamma_0(4), {\bf 1}_4)$. By Lemma \ref{dim}, the dimension of the space is 3.

Let $\ell$ be a prime. For $f \in \cE_k(\Gamma_1(4))$, the associated 2-dimensional reducible $\ell$-adic representation will have the form $\rho_\ell = \varepsilon_1 \oplus \varepsilon_{2} \chi_\ell^{k-1}$, where the Dirichlet characters $\varepsilon_1 \cdot \varepsilon_2 = {\bf 1}_4$ (see \cite{ribet}, Pp. 28), and $\chi_\ell$ denotes the $\ell$-adic cyclotomic character. Furthermore, since the product of the conductors of $\varepsilon_1$ and $\varepsilon_2$ must divide the desired level $N = 4$, both characters must be trivial. Consider the representation
	$$\rho = 1 \oplus \chi_\ell^{k-1}: \Gal(\overline{\bQ}/\bQ) \longrightarrow \GL_2(\bQ_\ell).$$
The $L$-series attached to this represenation is 
	$$ L(s, 1\oplus \chi_\ell^{k-1}) = \prod_p \frac{1}{1-p^{-s}} \cdot \prod_p \frac{1}{1-p^{k-1}p^{-s}} = \sum_{m \geq 1} \left(\sum_{d \mid m} d^{k-1}\right) \frac{1}{m^s}.$$
Taking the inverse Mellin transform gives us the $q$-series
\begin{equation}\label{evenweight}
E(z) = a_0 + \sum_{m \geq 1} \left(\sum_{d\mid m} d^{k-1}\right)q^m. \end{equation}
In order to calculate $a_0$, we consider the functional equation for $L$-series and check that its Lambda function $\Lambda(s,\rho)$ at $s = 0$ has residue $-a_0 = -\zeta(1-k)/2$, in terms of the Riemann zeta function. Thus, $a_0 = -\frac{\bf b_k}{2k}$ where ${\bf b_k}$ is the $k$th Bernoulli number (see Section \ref{proof2}).

Using the classical converse theorem of Weil, we conclude that
$E(z)$ lies inside $\cE_{k}(\SL_2(\bZ)) \subseteq \cE_k(\Gamma_1(4))$ since the conductor of the trivial character is 1 (see \cite{miyake}, \S 4.7, and \cite{milneMF}, Ch. 9). However, this implies that the $q$-series $E(q^2)$ and $E(q^4)$ are also in $\cE_k(\Gamma_1(4))$, and furthermore, they are linearly independent. These are all eigenforms, and since the dimension of $\cE_k(\Gamma_1(4))$ is three, $\cB = \{E(q), E(q^2), E(q^4)\}$ is a basis.
	
Since $\theta_n(q)$ is a modular form of weight $k = n/2$, we are considering the case when $n \equiv 0 \bmod{4}$ for $n >8$. Thus, if $\theta_n(q)$ is elementary, then there exist constants $a, b, c \in \bC$ such that $$\theta_{n}(q) = a \cdot E(q) + b \cdot E(q^2) + c \cdot E(q^4).$$ Consider the determinant of the matrix of coefficients of the 4 $q$-series:
	$$\det(M) = \begin{array}{| c c c c | cl} 
 	2n &  4{n \choose 2} & 8{n \choose 3} & 16 {n \choose 4} + 2n & & (\text{\small \em coefficients of  $\theta_n$})\\ 
	1 & 1+2^{n/2 - 1} & \displaystyle1 + 3^{n/2- 1} &1 + 2^{n/2- 1} + 4^{n/2 - 1} & & (\text{\small \em coefficients of  $E(q)$})\\
	0 &  1 & 0 & 1 + 2^{n/2 - 1} & & (\text{\small \em coefficients of  $E(q^2)$})\\
	0 & 0 & 0 & 1 & & (\text{\small \em coefficients of $E(q^4)$})\end{array}$$
The determinant of $M$ is zero if and only if there is a linear dependence amongst the coefficients, i.e. if the coefficients of $q$, $q^2$, $q^3$, and $q^4$ of $\theta_n$ can be written in terms of coefficients of the forms in $\cB$. Solving for the determinant gives 
	$$\det(M) = - \left(2n\cdot\left(1 + 3^{\frac{n}{2} - 1}\right) - 8{n \choose 3}\right) = - 2n  - 2n\cdot 3^{\frac{n}{2} - 1} + 8 \frac{n(n-1)(n-2)}{6}.$$
Note that the growth of this function is dominated by $-2n \cdot 3^{\frac{n}{2} - 1}$, and for $n > 8$, $\det(M)$ as a function on $n$ is monotonically decreasing. The determinant is 0 when $n = 4$ and $8$, and negative for $n = 12$, thus $\det(M)$ is nonzero for all larger $n$. This implies that $\theta_n \notin \cE_{n/2}(\Gamma_1(4))$ for $n \equiv 0 \pmod{4}$ when $n > 8$, hence by Lemma \ref{hecke}, this implies that $\theta_n$ is not elementary for $n > 8$ when $n \equiv 0 \pmod{4}$ (as there are no CM cuspidal forms of even weight). \\

Now assume $n \equiv 2 \pmod{4}$, i.e the weight of $\theta_n(q)$ is odd. When $n \equiv 6 \pmod{8}$, there is no CM subspace of $\cS_{n/2}(\Gamma_1(4))$ since $k = \frac{n}{2} \equiv 3 \bmod{4}$. We first check that $\theta_n \notin \cE_{n/2}(\Gamma_1(4))$ for all $n \equiv 2 \bmod{4}$ and $n > 6$, which will reduce the problem to whether there is a contribution by a CM cusp form when $n \equiv 2 \bmod{8}$.

When $n \equiv 2 \pmod{4}$, the $\cE_{k}(\Gamma_1(4))$ is a two-dimensional subspace by Lemma \ref{dim}. Since the weight $k$ is odd, $\cE_k(\Gamma_1(4)) = \cE_k(\Gamma_0(4),\chi_4)$. Thus, we want to construct 2 linearly independent Eisenstein eigenforms of level 4 with Nebentypus $\chi_4$.

Let $\ell$ be a prime. Similar to before, a 2-dimensional reducible $\ell$-adic representation associated to a basis element of $\cE_k(\Gamma_0(4),\chi_4)$ will have the form $\varepsilon_1 \oplus \varepsilon_2 \chi_\ell^{k-1}$, where the characters $\varepsilon_1 \cdot \varepsilon_2 = \chi_4$ and the product of their conductors divide the level (see \cite{ribet}, Pp. 28). Thus, we have the following two possible representations:
\begin{eqnarray*} \rho_1 := \chi_4 \oplus \chi_\ell^{k-1}: \Gal(\overline{\bQ}/\bQ) & \longrightarrow& \GL_2(\bQ_\ell), \\
                            \rho_2 := 1 \oplus \chi_4\chi_\ell^{k-1}: \Gal(\overline{\bQ}/\bQ) & \longrightarrow & \GL_2(\bQ_\ell). 
\end{eqnarray*} 
The $L$-series attached to these representations are 
	\begin{eqnarray*} L(s,\rho_1) & = & \displaystyle\prod_p \frac{1}{1-\chi_4(p)p^{-s}} \cdot \displaystyle\prod_p \frac{1}{1 - p^{k-1}p^{-s}}  =  \displaystyle\sum_{m \geq 1} \left(\sum_{d \mid m} \chi_4\left(\frac{m}{d}\right)d^{k-1}\right) \frac{1}{m^s}, \\	
	   L(s,\rho_2) & = & \displaystyle\prod_p \frac{1}{1 - p^{-s}} \cdot \displaystyle\prod_p \frac{1}{1 - \chi_4(p)p^{k-1}p^{-s}}  =  \displaystyle\sum_{m \geq 1} \left(\sum_{d \mid m} \chi_4(d) d^{k-1} \right)\frac{1}{m^s}. \end{eqnarray*}
Applying inverse Mellin transforms to the above $L$-functions produces two $q$-series
\begin{eqnarray}\label{weight3}
 	E_1(q) &=& a_{0,1} + \sum_{m \geq 1} \left(\sum_{d \mid m} \chi_4\left(\frac{m}{d}\right)d^{k-1}\right)q^m, \\
	E_2(q) &=& a_{0,2} + \sum_{m \geq 1} \left(\sum_{d \mid m} \chi_4(d)d^{k-1}\right)q^m. \label{weight32} \end{eqnarray} 
One can calculate $a_{0,1}$ and $a_{0,2}$ by considering the functional equations for the $L$-series and calculating the residues of the associated Lambda functions $\Lambda(s,\rho_1)$  and $\Lambda(s,\rho_2)$ at $s = 0$. One can check that $-a_{0,1} = -L(-2,\chi_4)/2$, in terms of the $L$-series associated to the Dirichlet character $\chi_4$, and $-a_{0,2} = 0$. Thus, $$a_{0,1} = -\frac{\bf b_k^{\chi_4}}{2k} 
\quad \text{and} \quad a_{0,2} = 0, \quad \quad \text{where} \quad \frac{t \cdot e^{t}}{e^{4t} - 1} - \frac{t \cdot e^{3t}}{e^{4t} - 1} = \sum_{k = 0}^\infty {\bf b_k^{\chi_4}} \frac{t^k}{k!},$$ is the $k$th Bernoulli number associated to the nontrivial Dirichlet character of conductor 4 defined by the above equation (see \cite{stein}, Ch. 5). 

The converse theorem of Weil implies that both $E_1(q)$ and $E_2(q)$ are distinct eigenforms on $\Gamma_1(4)$ of weight $k$, hence the two series are linearly independent (see \cite{miyake}, \S 4.7, and \cite{milneMF}, Ch. 9). By construction, $E_1(q)$ and $E_2(q) \in \cE_k(\Gamma_1(4))$, thus we take $\cB = \{E_1(q), E_2(q)\}$ as a basis for the Eisenstein space, $\cE_{n/2}(\Gamma_1(4))$. 

To show $\theta_n \notin \cE_{n/2}(\Gamma_1(4))$, we check if the following matrix of coefficients for $q$, $q^2$, and $q^3$ has non-zero determinant:
	$$\det(M') = \begin{array}{| c c c  | cl} 
	 2n & 4{n \choose 2} & 8{n \choose 3}  & & (\mbox{\small \em coefficients of  $\theta_n$})\\
	1 & 2^{n/2 - 1} & -1 + 3^{n/2 - 1}  & & (\mbox{\small \em coefficients of  $E_1(q)$})\\
	1 &  1  & 1 - 3^{n/2 - 1} & & (\mbox{\small \em coefficients of  $E_2(q)$}) \end{array} $$
Solving for the determinant,
	\begin{eqnarray*} \det(M') &=& 2n \left(2^{n/2 - 1} + 1 \right)\left(1-3^{n/2 - 1}\right)  - 8{n \choose 2}\left(1 - 	3^{n/2 - 1}\right) + 8{n \choose 3}\left(-2^{n/2 - 1}\right) \\ 
	&=& \left(-4n^2 + 6n\right) + 2^{n/2 - 1}\cdot\left(-\frac{4}{3}n^3 + 4n^2 - \frac{2}{3}n\right) + 3^{n/2 - 1}\cdot\left(4n^2 - 6n\right) + 6^{n/2 - 1} \cdot \left(-2n\right). \end{eqnarray*}
Note that for $n > 6$, $\det(M')$ as a function on $n$ is governed by the growth of $-2n\cdot 6^{n/2 - 1}$. One can check that for $n = 6$, $\det(M') = 0$, and for $n > 6$, $\det(M') < 0$ and is monotonically decreasing. We conclude that for $n \equiv 2 \bmod{4}$ and $n > 6$, $\theta_n$ is not a linear combination of Eisenstein series. By Lemma \ref{hecke}, this implies that for all positive $n \equiv 6 \bmod{8}$ except $n = 6$, $\theta_n$ is not elementary. \\
	
	It remains to show that $\theta_n$ is not elementary when $n \equiv 2 \bmod{8}$ and $n > 10$. Since the weight $k = \frac{n}{2} \equiv 1 \bmod{4}$, the space $\cS^{cm}_{n/2}(\Gamma_1(4))$ is generated by a single CM cusp form $C(q)$ constructed below.
	
	Let $\psi$ denote the algebraic Hecke character unramified away from 2 on $\bQ(i)$ with $\infty$-type $\frac{n}{2} - 1$. Viewing it adelically, $\psi$ acts on $\bC^\times$ by sending $z  \mapsto z^{1-k}$, and on primes $\fp \in \cO_K$, $\psi$ sends $\fp \mapsto \fp^{k-1}$, where $k = \frac{n}{2}$ as usual. The $L$-series attached to this character is 
	\begin{eqnarray*} L(s,\psi) &=& \left(1 + (1+i)^{k-1}2^{-s}\right)^{-1} \prod_{p \equiv 3 (4)} \frac{1}{1 - p^{k-1}p^{-2s}} \prod_{\stackrel{p \equiv 1 (4)}{p = \fp\overline{\fp}}} \frac{1}{\left(1 - \fp^{k-1}p^{-s}\right)\left(1 - \overline{\fp}^{k-1}p^{-s}\right)}\\
					  &=& \frac{1}{4} \sum_{m\geq 1}\left(\sum_{\stackrel{\fd \in \bZ[i]}{\Nm(\fd) = m}} \fd^{k-1}\right)\frac{1}{m^s}. \end{eqnarray*}

Applying an inverse Mellin transform gives the $q$-series
	\begin{eqnarray}\label{heckecusp} 
C(q) &=& \frac{1}{4}\sum_{m \geq 1}\left(\sum_{\stackrel{\fd \in \bZ[i]}{\Nm(\fd) = m}}\fd^{k-1}\right)q^m \\ 
	&=& q + (-4)^{k-1}q^2 + 2^{k-1}q^4 + ... \nonumber \end{eqnarray}
By Weil's converse theorem, this is a cusp form on $\Gamma_1(4)$ of weight $k$, and by construction, it has complex multiplication by the Dirichlet character $\chi_4$ attached to $\bQ(i)$ (see \cite{miyake}, \S 4.7, and \cite{milneMF}, Ch. 9). Thus $C(q) \in \cS^{cm}_k(\Gamma_1(4))$ and in fact, generates the space. 

Thus, $\theta_n$ is not elementary if the determinant of the matrix of coefficients for $q$, $q^2$, $q^3$, and $q^4$,
	$$\det(M'') = \begin{array}{| c c c c | cl}  
 	2n & 4{n \choose 2} & 8{n \choose 3} &16{n \choose 4} + 2n & & (\mbox{\small \em coefficients of  $\theta_n$})\\
	1 & 2^{n/2 - 1} & -1 + 3^{n/2 - 1}  & 4^{n/2 - 1} & & (\mbox{\small \em coefficients of  $E_1(q)$})\\
	1 &  1  & 1 - 3^{n/2 - 1} & 1 & & (\mbox{\small \em coefficients of  $E_2(q)$}) \\
	1 & (-4)^{\frac{n - 2}{8}} & 0 &  2^{\frac{n-2}{2}}  & & (\mbox{\small \em coefficients of $C(q)$})\end{array} $$ 
has non-zero determinant.

A straightforward calculation demonstrates the fact that $\det(M'')$ as a function on $n$ has negative growth on the order of $\cO(12^{n/2 - 1})$ for $n$ large enough. For $n = 10$, $\det(M'') =0$, but for $n = 18$, $\det(M'') = -439,038,812,160.$ It is therefore easy to check that $\det(M'')$ is nonzero for all $n > 10$ such that $n = 2 \bmod{8}$. This proves the theorem using the dimension arguments from Lemmas \ref{dim} and \ref{hecke}. \qed

\section{$\theta_n(q)$ is not elementary for $n > 10$, redux} \label{proof2}

Using the construction of a basis for the CM cuspidal subspace in Corollary \ref{CM basis}, we can show that the Hecke action of the cuspidal part of $\theta_n(q)$ does not coincide with the action on elements of $\cS_{\frac{n}{2}}^{cm}(\Gamma_1(4))$ at the prime $p = 3$ for $n > 10$. This allows for another proof of Theorem \ref{main}. We first describe Hecke operators which vanish on CM cusp forms. 

\begin{lemma} For any $f \in \cS_k^{cm}(\Gamma_1(4))$ with $k > 1$, the Hecke operator $T_p(f) = 0$ for any prime $p \equiv 3\bmod{4}$. \end{lemma}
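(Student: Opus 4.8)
The plan is to reduce the vanishing of the operator $T_p$ to the vanishing of a single Fourier coefficient on each CM eigenform, and then exploit the fact that a prime $p \equiv 3 \bmod 4$ is inert in $\bQ(i)$.

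First I would recall that, by Lemma \ref{hecke} and Corollary \ref{CM basis}, every element of $\cS_k^{cm}(\Gamma_1(4))$ is a $\bC$-linear combination of CM eigenforms arising from Hecke characters on $K = \bQ(i)$; in the case at hand the space is in fact at most one-dimensional, spanned by the form $f_{\bQ(i),\psi}$ of $\infty$-type $t = k-1$ (this is where the hypothesis $k > 1$, i.e. $t \in \bZ_{>0}$, is used so that Theorem \ref{hs} applies). Since $T_p$ is a linear operator, it suffices to verify $T_p f_{K,\psi} = 0$ on such a generator.

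Next, because $p \equiv 3 \bmod 4$ we have $p \nmid 4$, so $T_p$ is the ordinary Hecke operator at a good prime and acts on the normalized eigenform $f_{K,\psi}$ by the scalar $a_p$, its $p$-th Fourier coefficient. From the defining $q$-expansion (\ref{CM form}), $a_p = \sum_{\Nm(\fa) = p} \psi(\fa)$, a sum over integral ideals $\fa$ coprime to the conductor with $\Nm(\fa) = p$. But $p \equiv 3 \bmod 4$ is precisely the condition that $p$ be inert in $\bQ(i)$, so no ideal of $\cO_K$ has norm $p$, the sum is empty, and $a_p = 0$. Hence $T_p f_{K,\psi} = a_p f_{K,\psi} = 0$. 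Equivalently and more conceptually, one can invoke the CM property directly: $f_{K,\psi} \otimes \chi_4 = f_{K,\psi}$ forces $\chi_4(p)\,a_p = a_p$ for every $p \nmid 4$, and $\chi_4(p) = -1$ for $p \equiv 3 \bmod 4$, which again yields $a_p = 0$.

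I expect no serious obstacle here beyond bookkeeping; the only point deserving care is the passage from eigenforms to arbitrary elements of the space. For $\Gamma_1(4)$ this is immediate, since the CM subspace is at most one-dimensional and no oldform shifts $f_{K,\psi}(rz)$ with $r > 1$ occur. To make the statement robust enough to cover the CM subspace on a general $\Gamma$ (as in the earlier remark), I would additionally note that the shift operator $V_r \colon q \mapsto q^r$ commutes with $T_p$ whenever $\gcd(p,r) = 1$, so that $T_p f_{K,\psi}(rz) = \bigl(T_p f_{K,\psi}\bigr)(rz) = 0$ as well; linearity then gives $T_p f = 0$ for every $f$ in the space.
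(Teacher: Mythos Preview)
Your proof is correct and follows essentially the same approach as the paper: reduce to the CM eigenforms $f_{\bQ(i),\psi}$ via Corollary \ref{CM basis}, then use that $a_p = 0$ for primes inert in $\bQ(i)$ (equivalently, $\chi_4(p) = -1$) to conclude $T_p f = 0$. Your version is in fact more careful about the bookkeeping (linearity, the eigenform reduction, the commutation of $T_p$ with $V_r$) than the paper's own proof.
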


\begin{proof} Recall that Corollary \ref{CM basis} implies that $\cS_k^{cm}(\Gamma_1(4))$ is generated over $\bC$ by forms $f_{\bQ(i),\psi}(z)$ defined by (\ref{CM form}), where $\psi$ is a Hecke character with $\infty$-type $t$ divisible the level $N = 4$. In order for any $f_{\bQ(i),\psi}(z)$ to have CM by the quadratic character $\chi_4$, the $p$th coefficient must vanish, i.e. $a_p = 0$ for all inert primes $p$, for all CM eigenforms. Thus, for any prime $p \equiv 3 \pmod{4}$, $T_p(f)$ must necessarily vanish for any $f \in \cS_k^{cm}(\Gamma_1(4))$. \end{proof}

\noindent \textbf{Remark.} Note that this result can be easily generalized to any level $N$: for any $f \in \cS_k^{cm}(\Gamma_1(N))$ with $k > 1$, $T_p(f) = 0$ for any prime which is inert in all imaginary quadratic fields $K$ with discriminant $-d$ where $d \mid N$. It is also possible that for a larger set of primes, $T_p(f)$ vanishes (e.g. for $\Gamma_1(144)$, $T_p(f) = 0$ for all primes $p \equiv -1 \bmod{12}$ or equivalently, all primes which are inert in only $\bQ(i)$ and $\bQ(\sqrt{-3}$), see \cite{lacunarity}, Lemma 1). Furthermore, the converse to this statement is also true: any cusp form $f \in \cS_k(\Gamma_1(4))$ such that $T_p(f) = 0$ for all $p \equiv 3 \bmod{4}$ is an element of $\cS_k^{cm}(\Gamma_1(4))$ (see \cite{chebotarev}, Corollary 15.2).

Thus, if we show that the part of $\theta_n$ which vanishes on all three cusps of $\Gamma_1(4)$ has a nonzero coefficient for $q^p$ for some $p \equiv 3 \bmod{4}$, then $\theta_n$ is not elementary. Since $\theta_n \in \cM_{n/2}(\Gamma_1(4)) = \cE_{n/2}(\Gamma_1(4)) \oplus \cS_{n/2}(\Gamma_1(4))$, we can decompose $\theta_n$ as the sum of an Eisenstein series and cusp form, i.e. write $\theta_n(z) = E_{n/2}(z) + f_{n/2}(z)$. 

We use a result derived from the Siegel-Weil formula describing the coefficients of $E_{n/2}(z)$ (see \cite{Siegel}, pp. 373-376). Note that because we are focused on Hecke operators of odd primes, we can focus on the odd coefficients of the Fourier expansion. In order to write down formulas for the odd coefficients, we must first understand certain constants. 

Let ${\bf e_k}$ denote the $k$th Euler number and ${\bf b_j}$ denote the $j$th Bernoulli number, which are defined by the following identities
	$$\frac{2}{e^t + e^{-t}} = \sum_{k=0}^\infty {\bf e_k} \frac{t^k}{k!}, \quad \quad \frac{t}{e^t - 1} = \sum_{j=0}^\infty {\bf b_j} \frac{t^j}{j!}.$$
The magnitudes of these numbers are related to values of $L$-series associated to the primitive Dirichlet characters of conductor 1 and 4:
	$${\bf b_j} = \frac{2 \cdot j!}{(2\pi i)^j}\cdot \zeta(j) \quad \mbox{if $j > 0$ even,} \quad \mbox{and} \quad {\bf e_k} = \frac{2^{2k+3} \cdot k!}{(2\pi i)^{k+1}} \cdot L(k+1,\chi_4) \quad \mbox{if $k > 0$ even}.$$

\begin{theorem}[Shimura \cite{squaresshimura} and \cite{3shimura}] If $E_{n/2}(z) = 1 + \sum_{m=1}^\infty c_m q^m \in \cE_{n/2}(\Gamma_1(4))$ denotes the $q$-expansion of the Eisenstein part of $\theta_{n/2}(q)$, then for odd $m$,
$$c_m = \begin{cases} 
		\displaystyle\frac{4}{|{\bf e_{n/2 - 1}}|}\cdot\left(\chi_4(m)\cdot 2^{n/2 - 1} + \chi_4\left(\frac{n}{2}\right)\right) \cdot \sum_{d \mid m} \chi_4(d) d^{n/2 -1} & \mbox{ if $n \equiv 2 \pmod{4}$ and $n > 2$} \\
		\displaystyle \frac{n}{(2^{n/2} - 1)|{\bf b_{n/2}}|} \cdot \sum_{d \mid m} d^{n/2-1} & \mbox{ if $n \equiv 0 \pmod{4}$} \end{cases}$$ \end{theorem}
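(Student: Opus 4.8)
The plan is to peel off the cuspidal part and reduce the statement, coefficient by coefficient, to the determination of finitely many scalars. Write $\theta_n = E_{n/2} + f_{n/2}$ with $E_{n/2} \in \cE_{n/2}(\Gamma_1(4))$ and $f_{n/2} \in \cS_{n/2}(\Gamma_1(4))$; since every cusp form has vanishing constant term, $E_{n/2} = 1 + \sum_{m\ge 1} c_m q^m$. I would then expand $E_{n/2}$ in the eigenform bases built by Weil's converse theorem in Section 3: for even $k = n/2$ write $E_{n/2} = a\,E(q) + b\,E(q^2) + c\,E(q^4)$ with $E$ as in (\ref{evenweight}), and for odd $k$ write $E_{n/2} = \alpha\,E_1(q) + \beta\,E_2(q)$ with $E_1, E_2$ as in (\ref{weight3}) and (\ref{weight32}).

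The first observation is that for odd $m$ the problem collapses dramatically. The series $E(q^2)$ and $E(q^4)$ are supported on even exponents, so they contribute nothing to an odd coefficient; hence in even weight $c_m = a\,\sum_{d\mid m} d^{k-1}$ for odd $m$, and only the single scalar $a$ survives. In odd weight, using that $\chi_4$ is completely multiplicative on odd integers (so $\chi_4(m/d) = \chi_4(m)\chi_4(d)$ for every divisor $d$ of an odd $m$), the $E_1$-coefficient $\sum_{d\mid m}\chi_4(m/d)d^{k-1}$ becomes $\chi_4(m)\sum_{d\mid m}\chi_4(d)d^{k-1}$, i.e.\ exactly $\chi_4(m)$ times the $E_2$-coefficient. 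Thus for odd $m$, $c_m = \big(\alpha\,\chi_4(m) + \beta\big)\sum_{d\mid m}\chi_4(d)d^{k-1}$, and everything reduces to identifying two scalars. Matching with the target formula, it remains to show $a = \tfrac{n}{(2^{n/2}-1)|{\bf b_{n/2}}|}$, $\alpha = \tfrac{4\cdot 2^{n/2-1}}{|{\bf e_{n/2-1}}|}$, and $\beta = \tfrac{4\,\chi_4(n/2)}{|{\bf e_{n/2-1}}|}$.

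To pin down the scalars I would invoke the Siegel--Weil formula (\cite{Siegel}, pp.\ 373--376): $E_{n/2}$ is precisely the genus Eisenstein series attached to the form $x_1^2 + \cdots + x_n^2$, whose $m$-th Fourier coefficient is the singular series, a product of local representation densities times an archimedean factor. For odd $m$ the Euler factors at odd primes assemble into the divisor sum $\sum_{d\mid m} d^{k-1}$ (even weight) or its twist $\sum_{d\mid m}\chi_4(d) d^{k-1}$ (odd weight); the density at the bad prime $2$ produces exactly the bracket $2^{n/2-1}\chi_4(m) + \chi_4(n/2)$ in odd weight and the factor $(2^{n/2}-1)$ in even weight; and the archimedean density together with the mass gives the leading constant. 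Staying inside the paper's framework, one may instead characterize $E_{n/2}$ as the unique element of the (at most three-dimensional) Eisenstein space sharing the constant terms of $\theta_n$ at all cusps of $\Gamma_1(4)$: the constant term at $\infty$ is $r_n(0)=1$, fixing $\alpha\,a_{0,1}=1$ (recall $a_{0,2}=0$ and $a_{0,1} = -{\bf b_{n/2}^{\chi_4}}/n$), while the constant terms at the cusps $0$ and $1/2$ are computed from $\theta_n = \theta_1^n$ and the transformation laws (\ref{theta eqns}); solving the resulting small linear system determines $a$, $\beta$ (and $b,c$).

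The final step is purely a normalization: convert the mass/archimedean constant --- which naturally appears as a special $L$-value, namely $\zeta(1-k)$ in even weight and $L(1-k,\chi_4)$ in odd weight through the residue computations already performed for $a_0$ and $a_{0,1}$ --- into the Euler and Bernoulli numbers of the statement, using the given identities ${\bf b_j} = \tfrac{2\,j!}{(2\pi i)^j}\zeta(j)$ and ${\bf e_k} = \tfrac{2^{2k+3}k!}{(2\pi i)^{k+1}}L(k+1,\chi_4)$ together with the functional equation relating $L(1-k,\cdot)$ to $L(k,\cdot)$. I expect the main obstacle to be twofold and entirely in the constants: first, the $2$-adic local density for sums of squares (the one ramified place) is the genuinely delicate part of the Siegel computation, and it is what manufactures the arithmetic factor depending on $\chi_4(n/2)$ and on $n/2 \bmod 4$; second, reconciling signs so that the bare $L$-values become the absolute values $|{\bf b_{n/2}}|$ and $|{\bf e_{n/2-1}}|$ demands careful use of the functional equation and of the positivity of the $r_n(m)$, which is exactly where the closed form is most easily mis-normalized.
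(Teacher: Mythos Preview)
Your proposal is sound and matches the paper's treatment: the paper does not actually prove this theorem but cites it to Shimura, adding only the one-sentence remark that the formula comes from Siegel--Weil local density computations on the genus and mass of $\bZ^n$, which is precisely your main route. Your extra organizational layer---expanding $E_{n/2}$ in the explicit Eisenstein bases of Section~3 and observing that odd $m$ collapses everything to one or two scalars---and your alternate path via constant terms at the cusps are both correct refinements that the paper does not spell out.
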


This result comes from using local computations on the genus and the mass of the standard lattice $\bZ^n$ to understand the ``dominant'' growth factor $c_m$ of $r_n(m)$. To demonstrate that $r_n(m)$ is generally governed by the growth of $c_m$, one shows that $c_m - r_n(m)$ are coefficients of a cusp form, thus $1 + \sum_{m=1}^\infty c_mq^m$ is the Eisenstein part of the $\theta_n(q)$.

\begin{lemma} If $f_{n/2}(z)$ denotes the cuspidal part of $\theta_n(z)$, then the 3rd coefficient $a_3(f_{n/2}) \neq 0$ if $n > 10$. \end{lemma}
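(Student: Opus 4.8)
The plan is to strip off the Eisenstein part and reduce the claim to one numerical inequality. With the decomposition $\theta_n(z) = E_{n/2}(z) + f_{n/2}(z)$ and $E_{n/2}(z) = 1 + \sum_{m \geq 1} c_m q^m$, the third coefficient of the cuspidal part is $a_3(f_{n/2}) = r_n(3) - c_3$, where $r_n(3) = 8\binom{n}{3}$ is read off from the expansion \eqref{theta} and $c_3$ is supplied (for the odd index $m = 3$) by the Siegel--Weil formula of the preceding theorem. Thus the lemma is equivalent to the inequality $c_3 \neq 8\binom{n}{3}$. This is exactly the input needed downstream: by the lemma above, $T_3$ annihilates $\cS_{n/2}^{cm}(\Gamma_1(4))$, and since the first Fourier coefficient of $T_3 g$ equals the third coefficient of $g$, every CM form has vanishing third coefficient, so $a_3(f_{n/2}) \neq 0$ forces $f_{n/2}$ out of $\cS_{n/2}^{cm}(\Gamma_1(4))$ and $\theta_n$ is not elementary.

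First I would make $c_3$ explicit in each parity. Writing $k = n/2$ and using $\chi_4(3) = -1$ together with $\sum_{d\mid 3}\chi_4(d)d^{k-1} = 1 - 3^{k-1}$, the formula gives
$$c_3 = \frac{n\,(1 + 3^{k-1})}{(2^{k}-1)\,|{\bf b_k}|} \quad (n \equiv 0 \bmod 4), \qquad c_3 = \frac{4\bigl(\chi_4(k) - 2^{k-1}\bigr)\bigl(1 - 3^{k-1}\bigr)}{|{\bf e_{k-1}}|}\quad (n \equiv 2 \bmod 4).$$
Substituting the closed forms $|{\bf b_k}| = \tfrac{2\,k!\,\zeta(k)}{(2\pi)^k}$ and $|{\bf e_{k-1}}| = \tfrac{2^{2k+1}(k-1)!\,L(k,\chi_4)}{(2\pi)^k}$ recorded before the theorem, and using the elementary bounds $1 < \zeta(k) < \zeta(2)$ and $\tfrac{\pi}{4} \le L(k,\chi_4) < 1$, yields two-sided estimates showing $c_3 \asymp \tfrac{n}{6}\cdot\tfrac{(3\pi)^k}{k!}$ in the even case and $c_3 \asymp \tfrac{1}{3}\cdot\tfrac{(3\pi)^k}{(k-1)!}$ in the odd case; in particular the factorial beats the exponential, so $c_3 \to 0$ while $r_n(3)$ grows cubically.

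The decisive observation is that in both parities the ratio of successive values (within a fixed parity, $k \to k+2$) satisfies $\tfrac{c_3(k+2)}{c_3(k)} \to \tfrac{9\pi^2}{k(k+1)}$, the correction factors coming from $\zeta$ and $L(\cdot,\chi_4)$ tending to $1$. Since $9\pi^2 \approx 88.8$, this ratio drops below $1$ as soon as $k(k+1) > 9\pi^2$, i.e. for $k \geq 9$. Hence $c_3$ is eventually decreasing in $k$ while $r_n(3) = 8\binom{n}{3}$ is increasing, so once $c_3 < r_n(3)$ holds at the threshold weight it persists for all larger $n$: one checks $c_3 < r_n(3)$ directly at $n = 20$ (even) and $n = 18$ (odd), which then covers every larger $n$ of that parity. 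The finitely many smaller weights $n \in \{12,14,16\}$ are settled by direct substitution; for instance $n = 12$ gives $c_3 = 1952 \neq 1760 = r_{12}(3)$ and $n = 14$ gives $c_3 = \tfrac{189280}{61} \neq 2912 = r_{14}(3)$. In every case $r_n(3) - c_3 \neq 0$, which is the lemma.

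I expect the real obstacle to sit in this boundary range, where $c_3$ and $r_n(3)$ are genuinely close. Indeed $c_3 = r_n(3)$ exactly at $n = 8, 10$ (where $\theta_n$ is honestly elementary), after which $c_3$ overshoots, strictly exceeding $r_n(3)$ at $n = 12, 14$, and only then decays past it toward $0$; the ratio $c_3/r_n(3)$ is therefore not monotone and meets the value $1$ only at non-integer arguments. Making the argument airtight thus requires two ingredients: the uniform factorial-versus-exponential estimate behind $\tfrac{c_3(k+2)}{c_3(k)} \to \tfrac{9\pi^2}{k(k+1)}$, valid for $k \geq 9$, together with explicit control of the $\zeta$- and $L$-correction factors; and a direct verification that none of the integer crossing-candidates $n \in \{12,14,16,18\}$ produces exact equality. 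The closed-form evaluations above supply the latter, completing the proof.
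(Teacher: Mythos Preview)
Your proposal is correct and follows essentially the same route as the paper: both compute $a_3 = r_n(3) - c_3$ from the Siegel--Weil/Shimura formula, show that $c_3$ decays like $(3\pi)^k/k!$ while $r_n(3)$ grows cubically, and then clear the finitely many borderline weights by direct evaluation (the paper tabulates the same values $n=12,\dots,20$ you cite). The only technical difference is that the paper bounds $|{\bf b_k}|$ and $|{\bf e_{k-1}}|$ via the Stirling-type inequalities of Leeming, whereas you substitute the exact expressions in terms of $\zeta(k)$ and $L(k,\chi_4)$ and bound those; your version is a bit more self-contained and makes the ratio $c_3(k+2)/c_3(k)\sim 9\pi^2/k(k+1)$ transparent, but the underlying estimate and the structure of the argument are the same.
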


\begin{proof}  One can easily compute that $r_n(3) = 8{n\choose 3} = \frac{4n(n-1)(n-2)}{3}$. We can then calculate the third coefficient of the $q$-series expansion of $f_{n/2}(z) = \theta_n(z) - E_{n/2}(z)$ using this equality:
	$$f_{n/2}(z) = \sum_{m=1}^\infty a_mq^m = \sum_{m=0}^\infty r_n(m)q^m - \sum_{m=0}^\infty c_mq^m \in \cS_{n/2}(\Gamma_1(4)).$$
Thus, taking the difference of $c_3$ and $r_n(3)$ gives the coefficients of $f_{n/2}(z)$:
	$$a_3 = \begin{cases}
	\displaystyle \frac{4n(n-1)(n-2)}{3} + \left(\frac{4}{|{\bf e_{n/2 - 1}}|}\right) \cdot \left(2^{n/2 - 1} - \chi_4\left(\frac{n}{2}\right)\right) \cdot \left(1 - 3^{n/2 - 1}\right)  & \mbox{ if $n \equiv 2 \pmod 4$} \\
	\displaystyle \frac{4n(n-1)(n-2)}{3} + \left(\frac{n}{|{\bf b_{n/2}}|}\right) \cdot \left(\frac{1 + 3^{n/2 - 1}}{1 - 2^{n/2}}\right) & \mbox{ if $n \equiv 0 \pmod{4}$} \end{cases}$$
Note that the values of $|{\bf b_j}|$ and $|{\bf e_k}|$ for even $j$ and $k$ have bounds given below (see \cite{leeming}), 
	$$4\sqrt{\frac{\pi j}{2}} \left(\frac{j}{2\pi e}\right)^j < |{\bf b_j}|, \quad \text{ and } \quad 8\sqrt{\frac{k}{2\pi}} \left(\frac{2k}{\pi e}\right)^{k} < |{\bf e_k}|.$$
Thus, we can bound $a_3$ as a function on $n$ by
\begin{eqnarray*}
	a_3(n) &>& \frac{4n(n-1)(n-2)}{3} + \left(\frac{1}{2}\sqrt{\frac{2\pi}{n/2 - 1}}\left(\frac{\pi e}{n - 2}\right)^{n/2 -1}\right) \cdot \left(2^{n/2 - 1} - 1\right) \cdot \left(1 - 3^{n/2 - 1}\right), \\
	a_3(n) &>& \frac{4n(n-1)(n-2)}{3} + \left(\frac{n}{4}\sqrt{\frac{4}{\pi n}} \left(\frac{4 \pi e}{n}\right)^{n/2}\right) \cdot  \left(\frac{1 + 3^{n/2 - 1}}{1 - 2^{n/2}}\right). 
\end{eqnarray*}

It is easy to check that for $n$ large enough, the non-polynomial part of the lower bounds has negative growth on the order of $\cO(n^{-n/2})$ which is dominated by the positive growth of the polynomial portion. Some asymptotic calculations further demonstrate that these functions on $n$ are monotonically increasing for $n > 18$ and $20$, so in particular $a_3$ is nonzero for $n > 10$ as seen in the following table.) 

\begin{center}
\begin{tabular}{|c|c|c|c|}\hline ${\bf n}$ & ${\bf c_3}$ & ${\bf r_n(3)}$ & ${\bf a_3 = r_n(3) - c_3}$ \\\hline \hline 4 & 32 & 32 & 0 \\\hline 6 & 160 & 160 & 0 \\\hline 8 & 448 & 448 & 0 \\\hline 10 & 960 & 960 & 0 \\\hline 12 & 1952 & 1760 & -192 \\\hline 14 & 189280/61 & 2912 & -11648/61 \\\hline 16 & 70016/17 & 4480 & 6144/17 \\\hline 18 & 1338240/277 & 6528 & 470016/277 \\\hline 20 & 157472/31 & 9128 & 125248/31 \\\hline \end{tabular} \end{center}
Thus, the cuspidal part of $\theta_n$ has a nonzero coefficient for $q^3$ in the Fourier expansion. The first $q$-series coefficient of $T_3(s_{n/2})$ is $a_3$ which is nonzero for $n > 10$. \end{proof}

Combining the two lemmas, we conclude that the cuspidal part of $\theta_n$ does not lie in the CM subspace of $\cS_{n/2}(\Gamma_1(4))$ for $n > 10$, hence $\theta_n$ cannot be elementary for these cases. Theorem \ref{main} again follows from showing that $\theta_n$ is elementary for $n = 2, 4, 6, 8, 10$ from Section \ref{exceptional}.

\section{Motivation for definition of elementary modular forms} 

While the notion of an elementary modular form coincides with the property of having lacunary cuspidal part as well as being attached to potentially abelian $\ell$-adic representations of $\Gal(\overline{\bQ}/\bQ)$, the motivation behind the definition of ``elementary'' is computational: an elementary modular form should have Fourier coefficients which have formulas that can be computed in polynomial time in $\log(m)$ (for the coefficient of $q^m$). Since the $m$th coefficient of an Eisenstein series is usually given as a sum over the positive divisors of $m$, one can weaken this by assuming factorization of $m$ is given when calculating the coefficient of $q^m$ in log-polynomial time. For $\theta_n$, this occurs for even $n < 12$, demonstrated by the formulas given in these exceptional cases, and these allow for efficiently computing $r_{n}(m)$, even for $\theta_{10}$ which is not purely Eisenstein. 
To demonstrate that modular forms on $\Gamma_1(4)$ that involve a non-CM cusp form are not ``elementary,'' we consider the case of $n = 12$.

\subsection{$n$ = 12} 

Note that the subspace of $\cM_6(\Gamma_1(4))$ consisting of Eisenstein series has dimension 3, and the subspace of cusp forms has dimension 1 by Lemma \ref{dim}. Furthermore, by Lemma \ref{hecke}, $\cS^{cm}_6(\Gamma_1(4))$ is trivial, and we can in fact use a well-known cusp form for the basis element of $\cS_6(\Gamma_1(4))$: 
	$$\sqrt{\Delta}(2z) = \ \eta^{12}(2z) \ := \  q\prod_{n=1}^\infty (1 - q^{2n})^{12} = q \left[1  - 12q^2 + 54q^4 - 88q^6 - 99q^8 + ...\right]$$ 
The Dedekind eta function $\eta(z)$ is a cusp form on $\Gamma_1(2)$ of weight $1/2$, hence $\sqrt{\Delta}(2z) \in \cM_6(\Gamma_1(4))$. It is a cusp form without complex multiplication. 

From above and (\ref{evenweight}), take $\{E(q), E(q^2), E(q^4), \sqrt{\Delta}(q^2)\}$ with $k = 6$ as a basis for $\cM_6(\Gamma_1(4))$. We then calculate the constants $a,b,c,d \in \bC$ such that $$\theta_{12}(q) = a \cdot E(q) + b \cdot E(q^2) + c \cdot E(q^4) + d \cdot \sqrt{\Delta}(q^2).$$ Note that by Theorem \ref{main}, $d$ must be nonzero. The first few coefficients of the $q$-series in the basis are as follows: 
\begin{eqnarray*}
E(q) &=& -\frac{\bf b_k}{2k} + q + 33\cdot q^2 + 244 \cdot q^3 + 1057 \cdot q^4 + ... \\
E(q^2) &=& -\frac{\bf b_k}{2k} + q^2 + 33 \cdot q^4 + ... \\
E(q^4) &=& -\frac{\bf b_k}{2k} + q^4 + ... \\
\sqrt{\Delta}(q^2) &=& q - 12\cdot q^3 + ... 
\end{eqnarray*}
 Since we know that the first few coefficients $r_{12}(m)$ are $$\theta_{12}(q) = 1 + 24\cdot q + 264 \cdot q^2 +  1760 \cdot q^3 + 7944 \cdot q^4 + ...,$$ thus $a = 8$, $b = 0$, and $c = -512$, $d = 16$.
 
Here, we can conclude that it is necessary to calculate the coefficients of $\eta^{12}$ in order to calculate $\theta_{12}(n)$. Furthermore, the converse is true. 
 
From Serre's point of view, $\eta^{12}$ is not lacunary, i.e. it is not a CM form, thus the coefficients cannot be given via characters of a CM field (see \cite{chebotarev}, \S 15). Furthermore, it has a positive density of non-zero coefficients in its $q$-series that contribute to $\theta_{12}$ and are hard to compute (see \cite{lacunarity}). The formulas for $n = 2, 4, 6, 8,$ and $10$ illustrate that $r_n(m)$ can be calculated efficiently if the prime factorization of $m$ is known (thus for a prime $p$, $r_n(p)$ can be computed in $\log(p)$-time). In the case of $n = 12$, this is not enough information as there is no analogous description of the coefficients of $\eta^{12}(2z)$ in terms of divisors. One must know how to compute the Ramanujan $\tau$ function in order to calculate $r_{12}(m)$.
 
 \subsection{Making non-elementary $\theta_n(q)$ computable}
 
Recently, Bas Edixhoven, Jean-Marc Couveignes, and Robin de Jong have proven that if $f = \sum a_n q^n$ is a modular form on $\SL_2(\bZ)$, then even when there is not an elementary formula for the Fourier coefficients, $a_p$ for $p$ prime can be computed in time polynomial in the weight $k$ and $\log(p)$ assuming GRH (see \cite{couveignes}). This implies that the prime Fourier coefficients $\tau(p)$ of $\Delta$ can be calculated in polynomial time with respect to $\log(p)$. Furthermore, Peter Bruin gives a description of a probabilistic algorithm in time polynomial to $k$ and $\log(p)$ which under the assumption of GRH, computes the Fourier coefficient of $q^p$ of eigenforms of level $2N'$ where $N'$ is squarefree (see \cite{bruin}). This includes calculating $r_{n}(m)$ for all even $n$ as discussed here, whether or not $\theta_n$ has an elementary formula. In particular, since $\theta_n$ is not elementary for all even $n > 10$, and therefore does not have a nice formula, such an algorithm is clearly necessary. Note that for elementary $\theta_n$, the formulas given here compute coefficients faster than Bruin's algorithm. Currently, Bruin's algorithm is largely theoretical and future work will be done to make it more practical. Other than understanding classical arithmetic functions, these algorithmic results are useful in computing eigenvalues of Hecke operators or equivalently, coefficients of eigenforms (see \cite{edixhoven}, \cite{couveignes}, and \cite{bruin}).

\nocite{bosman} \nocite{rankin} \nocite{ddt} \nocite{milnes} \nocite{milneMF} \nocite{rankin} \nocite{chebotarev} \nocite{pinkshimura} \nocite{squaresshimura} \nocite{jacobianshimura} \nocite{realshimura} \nocite{serreconjecture} \nocite{smith} \nocite{hurwitz} \nocite{ogg}
\bibliographystyle{plain}
\bibliography{Mthesisbib}{}
								  
\end{document}